\numberwithin{equation}{section}
\newtheorem{theorem}{Theorem}[section]
\newtheorem{lemma}[theorem]{Lemma}
\newtheorem{fact}[theorem]{Fact}
\theoremstyle{remark}
\newtheorem*{acknowledgments}{Acknowledgments}
\DeclareMathOperator{\Lie}{Lie}
\DeclareMathOperator{\End}{End}
\DeclareMathOperator{\Span}{Span}
\newif\ifshowflag
\newif\ifshowqstn
\newif\ifshowinfo
\newcommand{\eqx}{\simeq}   
\renewcommand{\phi}{\varphi}
\DeclareMathOperator{\sep}{sep}
\DeclareMathOperator{\diam}{diam}
\DeclareMathOperator{\dist}{dist}
\DeclareMathOperator{\id}{id}
\newcommand{\mathfont}{\mathbb} 
\newcommand{\mfC}{{\mathfont C}}      
\newcommand{\mfH}{{\mathfont H}}      
\newcommand{\mfN}{{\mathfont N}}      
\newcommand{\mfR}{{\mathfont R}}      
\newcommand{\mfS}{{\mathfont S}}      
\newcommand{\mfK}{{\mathfont K}}      
\newcommand{\mfG}{{\mathfont G}}      
\newcommand{\mfO}{{\mathfont O}}      
\def\rf#1{\@rf{#1}#1:;;}
\def\rfs#1{\@rfs{#1}#1:;;}
\def\rfm#1{\@rfF#1<>;;}
\def\@C{C}\def\@CC{CC}\def\@E{E}\def\@F{F}\def\@L{L}\def\@P{P}\def\@Q{Q}
\def\@R{R}\def\@S{S}\def\@T{T}\def\@TT{TT}\def\@X{X}\def\@s{s}\def\@f{f}\def\@H{H}
\def\@rf#1#2:#3;;{\def\@b{#2}
  \ifx\@b\@C Corollary~\ref{#1}\else%
  \ifx\@b\@CC Corollary~\ref{#1}\else%
  \ifx\@b\@E (\ref{#1})\else
  \ifx\@b\@F Fact~\ref{#1}\else%
  \ifx\@b\@L Lemma~\ref{#1}\else%
  \ifx\@b\@P Proposition~\ref{#1}\else%
  \ifx\@b\@Q Question~\ref{#1}\else%
  \ifx\@b\@R Remark~\ref{#1}\else%
  \ifx\@b\@S Section~\ref{#1}\else%
  \ifx\@b\@T Theorem~\ref{#1}\else%
  \ifx\@b\@TT Theorem~\ref{#1}\else%
  \ifx\@b\@X Example~\ref{#1}\else%
  \ifx\@b\@s \S\ref{#1}\else
  \ifx\@b\@f Figure~\ref{#1}\else%
  \ifx\@b\@H Conjecture~\ref{#1}\else%
  \ref{#1}\fi\fi\fi\fi\fi\fi\fi\fi\fi\fi\fi\fi\fi\fi\fi}
\def\@rfs#1#2:#3;;{\def\@b{#2}
  \ifx\@b\@C Corollaries~\ref{#1}\else%
  \ifx\@b\@CC Corollaries~\ref{#1}\else%
  \ifx\@b\@F Facts~\ref{#1}\else%
  \ifx\@b\@L Lemmas~\ref{#1}\else%
  \ifx\@b\@P Propositions~\ref{#1}\else%
  \ifx\@b\@Q Questions~\ref{#1}\else%
  \ifx\@b\@R Remarks~\ref{#1}\else%
  \ifx\@b\@S Sections~\ref{#1}\else%
  \ifx\@b\@T Theorems~\ref{#1}\else%
  \ifx\@b\@TT Theorems~\ref{#1}\else%
  \ifx\@b\@X Examples~\ref{#1}\else%
  \ifx\@b\@s \S\ref{#1}\else
  \ifx\@b\@f Figures~\ref{#1}\else%
  \ref{#1}\fi\fi\fi\fi\fi\fi\fi\fi\fi\fi\fi\fi\fi}
\def\@rfF<#1>#2;;{\def\@c{#2}
  \@rfs{#1}#1:;;\ifx\@c\empty\else\@rfL:#2;;\fi}
\def\@rfL:#1<#2>#3;;{\def\@b{#2}\def\@c{#3}
  #1\ifx\@b\empty\else\ref{#2}\ifx\@c\empty\else\@rfL:#3;;\fi\fi}
\def\vint_#1{\mathchoice
          {\mathop{\vrule width 6pt height 3 pt depth -2.5pt
                  \kern -8pt \intop}\nolimits_{\kern -4pt#1}}%
          {\mathop{\vrule width 5pt height 3 pt depth -2.6pt
                  \kern -6pt \intop}\nolimits_{#1}}%
          {\mathop{\vrule width 5pt height 3 pt depth -2.6pt
                  \kern -6pt \intop}\nolimits_{#1}}%
          {\mathop{\vrule width 5pt height 3 pt depth -2.6pt
                   \kern -6pt \intop}\nolimits_{#1}}}
\title{Invertible Carnot Groups}
\date{\today}   
\author{David M. Freeman}
\address{%
\footnotesize
\begin{tabular}{lr}
\textsf{University of Cincinnati Blue Ash College}\\
\textsf{9555 Plainfield Road, Cincinnati, Ohio 45236}
\end{tabular}
\normalsize}
\email{david.freeman@uc.edu}
\begin{document} 

\keywords{metric inversion, bi-Lipschitz homogeneity, Carnot groups}
\subjclass[2010]{53C17, 30L10, 30L05} 

\begin{abstract}
We characterize Carnot groups admitting a $1$-quasiconformal metric inversion as the Lie groups of Heisenberg type whose Lie algebras satisfy the $J^2$-condition, thus characterizing a special case of inversion invariant bi-Lipschitz homogeneity. A more general characterization of inversion invariant bi-Lipschitz homogeneity for certain non-fractal metric spaces is also provided.
\end{abstract}

\maketitle

\section{Introduction}\label{S:intro}      

In \cite[Theorem 5.1]{CDKR-Heisenberg}, the authors characterize the nilpotent components of Iwasawa decompositions of real rank one simple Lie groups by the fact that they are of Heisenberg type and admit certain conformal inversions. The conformal inversions of \cite[Theorem 5.1]{CDKR-Heisenberg} generalize the usual M\"obius inversions of Euclidean space. Both of these notions of inversion are generalized in turn by the concept of \textit{metric inversion} as studied in \cite{BHX-inversions}. In the present paper, we study Carnot groups that admit such metric inversions.

The Carnot groups of \cite{CDKR-Heisenberg} are equipped with a left-invariant gauge distance that is bi-Lipschitz equivalent to a left-invariant sub-Riemannian (or more generally, sub-Finsler) distance. The study of such left-invariant distances can be generalized by the study of metric spaces that admit a transitive family of uniformly bi-Lipschitz self-homeomorphisms. These spaces are said to be \textit{uniformly bi-Lipschitz homogeneous}.  

The concepts of metric inversion and bi-Lipschitz homogeneity are combined in the notion of \textit{inversion invariant bi-Lipschitz homogeneity} as studied in \cite{Freeman-iiblh} and \cite{Freeman-params}. In particular, \cite[Theorem 2.4]{Freeman-params} demonstrates that certain inversion invariant bi-Lipschitz homogeneous geodesic spaces are bi-Lipschitz equivalent to Carnot groups (cf. \cite{Ledonne-geodesic}, \cite{Ledonne-tangents}). However, \cite[Theorem 2.4]{Freeman-params} does not provide a characterization of Carnot groups via inversion invariant bi-Lipschitz homogeneity. Indeed, in light of \cite[Theorem 5.1]{CDKR-Heisenberg}, the invertibility of a Carnot group appears to be a very restrictive condition. The following theorem identifies the Carnot groups admitting a metric inversion under the additional assumption that the inversion is $1$-quasiconformal (see \rf{S:defs} and \rf{S:heisenberg} for relevant definitions). 

\begin{theorem}\label{T:main}
Suppose $\mfG$ is a sub-Riemannian Carnot group. The group $\mfG$ admits a $1$-quasiconformal metric inversion if and only if it is isomorphic to a generalized Heisenberg group.
\end{theorem}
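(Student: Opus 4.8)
The plan is to prove the theorem in two directions, with the ``if'' direction being essentially known and the ``only if'' direction requiring the real work. For the ``if'' direction, I would recall that a generalized Heisenberg group (a group of Heisenberg type whose Lie algebra satisfies the $J^2$-condition) is precisely the nilpotent part of the Iwasawa decomposition of a real rank one simple Lie group, and hence by \cite[Theorem 5.1]{CDKR-Heisenberg} admits a conformal inversion. I would then verify that this conformal inversion, when transported to the gauge/sub-Riemannian metric, is a metric inversion in the sense of \cite{BHX-inversions} and is $1$-quasiconformal: conformality with respect to the natural Kor\'anyi-type gauge gives $1$-quasiconformality with respect to the bi-Lipschitz-equivalent sub-Riemannian distance, since $1$-quasiconformality is preserved under bi-Lipschitz change of metric only up to a constant — so here I must be a little careful and argue directly that the inversion preserves the sub-Riemannian conformal structure, using that on a Carnot group $1$-quasiconformal maps are exactly the ones preserving the horizontal conformal structure (Margulis--Mostow / Pansu rigidity).

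The ``only if'' direction is the heart of the matter. Suppose $\mfG$ is a sub-Riemannian Carnot group admitting a $1$-quasiconformal metric inversion $\iota$. The first step is to promote the metric inversion to enough smoothness: a $1$-quasiconformal self-map of a Carnot group between open sets is, by the rigidity theory (Pansu, Margulis--Mostow, Capogna), smooth and in fact its Pansu differential is a graded similarity at every point. So $\iota$ restricted to $\mfG\setminus\{e\}$ is a smooth map whose differential at each point is conformal for the horizontal metric. The second step is to use the inversion relations — $\iota$ is an involution fixing the ``unit sphere'', swapping a neighborhood of the identity with a neighborhood of infinity, and intertwining dilations $\delta_r \leftrightarrow \delta_{1/r}$ up to the group structure — to pin down the algebraic structure. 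Conjugating dilations by $\iota$ forces a very rigid interplay between the grading and the conformal structure; this is exactly the situation analyzed in \cite{CDKR-Heisenberg}. Concretely, one shows that the existence of such a conformal involution forces the Carnot group to be of step $2$ and to carry a Clifford-module structure on its first layer (Heisenberg type), and then the $J^2$-condition drops out as the precise condition under which the candidate inversion map is conformal rather than merely quasiconformal with a nontrivial distortion.

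In more detail for that second step, I would follow the template of \cite[Theorem 5.1]{CDKR-Heisenberg} in reverse: write the candidate inversion in Kor\'anyi-type coordinates, compute its differential, and observe that $1$-quasiconformality (equivalently, the differential being a horizontal similarity everywhere) is equivalent to an algebraic identity on the $J$-maps $J_z$ for $z$ in the center. The two-step-ness is forced because the inversion must conjugate the one-parameter dilation group to its inverse while remaining a self-map of the group with a conformal differential — a higher-step grading produces incompatible scaling rates among the layers that no conformal map can reconcile (this is where I expect the argument to be most delicate, and where I would lean on \cite[Theorem 2.4]{Freeman-params} and the tangent-cone analysis of \cite{Ledonne-tangents} to rule out pathologies). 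Once step $2$ and Heisenberg type are established, the remaining identity characterizing when the inversion is $1$-quasiconformal is exactly the $J^2$-condition, and the classification of Heisenberg-type algebras satisfying $J^2$ as the Iwasawa $N$-groups of rank one simple Lie groups (i.e. generalized Heisenberg groups) finishes the proof.

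The main obstacle I anticipate is the passage from ``$1$-quasiconformal metric inversion exists'' to ``the differential of the inversion is everywhere a horizontal similarity of a very constrained algebraic form'': this requires combining the abstract rigidity of $1$-quasiconformal maps on Carnot groups with the specific combinatorial constraints imposed by the involution-plus-dilation-conjugation relations, and then extracting from those constraints that the step is $2$. Everything after that — recognizing Heisenberg type, isolating the $J^2$-condition, and invoking the known classification — is comparatively mechanical, modulo care with the two inequivalent notions of metric (gauge versus sub-Riemannian) that appear throughout.
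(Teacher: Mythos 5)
Your ``if'' direction is essentially the paper's: pull the inversion $\sigma$ of \cite[Theorems 4.2 and 5.1]{CDKR-Heisenberg} over to the sub-Riemannian distance and upgrade ``horizontal differential is a similarity'' to $1$-quasiconformality (the paper does this via \cite{CC06}), and your caution about bi-Lipschitz changes of metric not preserving $1$-quasiconformality is well placed. The genuine gap is in the ``only if'' direction. First, your argument leans on structural properties the hypothesis does not provide: in this paper a metric inversion is only a homeomorphism $\phi$ of $\mfG_p$ with $d(\phi(x),\phi(y))$ comparable to $d(x,y)/(d(x,p)d(y,p))$; it need not be an involution, need not fix any ``unit sphere,'' and need not conjugate $\delta_r$ to $\delta_{1/r}$ in any exact sense, so the ``involution-plus-dilation-conjugation relations'' you plan to exploit are simply not available and would themselves have to be manufactured. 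Second, and more seriously, the core claim --- that the existence of a non-affine $1$-quasiconformal map forces step two, Heisenberg type, and the $J^2$-condition --- is exactly the hard content, and ``running \cite{CDKR-Heisenberg} in reverse'' cannot deliver it: the computation there presupposes a Heisenberg-type algebra and the specific map $\sigma$ written in Kor\'anyi coordinates, where the $J_Z$ formalism already exists; on a general Carnot group (possibly of higher step, possibly step two but not of Heisenberg type) there are no $J$-maps about which to state the identity, and the appeal to ``incompatible scaling rates'' does not rule such groups out. This missing step is essentially \cite[Theorem 4.1]{CO14}; if you invoke that result (plus an argument that a metric inversion cannot be affine, e.g.\ because it is not bi-Lipschitz near $p$), your outline becomes a legitimate alternative proof, but as written the decisive implication is asserted rather than proved.

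For comparison, the paper never differentiates the inversion at all. It conjugates $\phi$ by left translations to get inversions $\phi_x$ centered at every point, composes these with translations to produce, for each pair of points of $\hat\mfG$, a map carrying it to any other pair, and shows via \rf{L:spherical-qm} and \rf{L:locally-compact} that the group these maps generate acts on the sphericalization by uniformly quasim\"obius homeomorphisms with locally compact, $\sigma$-compact closure. Kramer's classification of effective $2$-transitive actions on topological spheres (\rf{F:two-transitive}) then identifies the identity component of that closure with $G_\mfK^\circ$ acting on $\hat H_\mfK$, and a final limiting argument --- conjugating left translations by dilations and using the Bruhat decomposition --- shows the conjugating homeomorphism sends $\mfG$ onto $H_\mfK$. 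So the paper's route is global and group-theoretic where yours is local and analytic; yours can be repaired by resting it on \cite{CO14}, but without that the passage from ``$1$-quasiconformal metric inversion exists'' to ``step two, Heisenberg type, $J^2$'' is a gap.
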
 

While the assumption of $1$-quasiconformality is strong it is not altogether unnatural. Indeed, the inversions of \cite[Theorem 5.1]{CDKR-Heisenberg} are $1$-quasiconformal. Furthermore, \rf{T:main} illustrates a special case of the characterization of conformal maps on Carnot groups recently provided by \cite[Theorem 4.1]{CO14}. One should note, however, that the methods used to prove \rf{T:main} are somewhat different from those implemented in \cite{CO14}. 

In general, metric inversions are $K$-quasiconformal, with $1\leq K<+\infty$ (see \rf{S:defs}). One can obtain an analogue of \rf{T:main} for general metric inversions if more information is assumed about the Carnot groups in question. In particular, the following fact is contained in recent work of Xiangdong Xie (see \cite{Xie-non-rigid}, \cite{Xie-rigidity}, and \cite{Xie-filiform} for relevant definitions and results). 

\begin{fact}\label{F:xie}
Suppose $\mfG$ is a non-rigid sub-Riemannian Carnot group. If $\mfG$ is not contained in one of the following three classes of groups, then it does not admit a metric inversion:
\begin{enumerate}
  \item{Euclidean groups}
  \item{Heisenberg product groups}
  \item{Complex Heisenberg product groups}
\end{enumerate}
\end{fact}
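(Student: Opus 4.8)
The plan is to assemble this statement from Xie's structural results on the quasiconformal geometry of non-rigid Carnot groups, using the sphericalization of $\mfG$ as the bridge. First I would recast the hypothesis in quasiconformal terms: by \rf{S:defs} a metric inversion of $\mfG$ (based at the identity) is $K$-quasiconformal for some finite $K$, and by its defining properties it extends to a quasiconformal self-homeomorphism of the sphericalization $\dot{\mfG} = \mfG \cup \{\infty\}$ interchanging the identity with $\infty$. Because the left translations of a sub-Riemannian Carnot group are isometries and the dilations are homotheties, conjugating and composing the putative inversion with translations and dilations yields a quasiconformal group acting on $\dot{\mfG}$ that properly contains the similarity group of $\mfG$; crucially, this larger group contains elements that do not fix $\infty$.

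Next I would invoke Xie's analysis (\cite{Xie-non-rigid}, \cite{Xie-rigidity}, \cite{Xie-filiform}). For a non-rigid Carnot group $\mfG$, non-rigidity produces a canonical structure — a distinguished fibration or horizontal direction field, coming either from a reducible (product) decomposition or from the tower of a jet-type group — and Xie shows that, provided $\mfG$ is not one of the three exceptional families, every quasiconformal self-homeomorphism of $\dot{\mfG}$ must preserve this structure together with the distinguished point $\infty$. In particular, every quasiconformal self-map of $\dot{\mfG}$ fixes $\infty$, so the quasiconformal group of $\dot{\mfG}$ does not enlarge the similarity group in the manner an inversion would require.

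Combining the two steps gives the result by contradiction: a metric inversion would be a quasiconformal self-homeomorphism of $\dot{\mfG}$ sending the identity to $\infty$, contradicting the rigidity of the previous paragraph unless $\mfG$ lies in one of the listed classes (Euclidean groups, Heisenberg product groups, complex Heisenberg product groups). Hence a non-rigid sub-Riemannian Carnot group outside these three classes admits no metric inversion.

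I expect the main obstacle to be the first step rather than the second. One must check that the purely metric notion of inversion of \cite{BHX-inversions} genuinely yields a homeomorphism of the metrically defined sphericalization that is quasiconformal in the precise sense used in Xie's papers (whose results are phrased for quasiconformal maps between Carnot groups and their subdomains), and in particular that its singularity at $\infty$ is of the controlled type to which those results apply. A secondary, essentially bookkeeping difficulty is to match the exceptional families emerging from Xie's theorems with exactly the three classes named here, and to confirm there is no gap between the ``local'' (Tanaka prolongation) and ``global'' (quasiconformal group) notions of non-rigidity being used.
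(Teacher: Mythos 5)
Your argument is essentially the paper's: the paper likewise cites Xie's results to conclude that every quasiconformal self-homeomorphism of the sphericalization $\hat\mfG$ must permute left cosets of certain proper subgroups and hence fix $\infty$, while a metric inversion (being $L^2$-quasiconformal, as noted in \rf{S:defs}) swaps the basepoint with $\infty$. The auxiliary group generated by translations, dilations, and the inversion is not needed, but otherwise your route and the paper's coincide.
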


Let $\mfG$ be a non-rigid Carnot group that is not contained in one of the three classes listed above. Let $\hat\mfG$ denote a metric sphericalization of $\mfG$ (see \rf{S:defs}). If follows from the aforementioned work of Xie that any quasiconformal self-homeomorphism $f:\hat\mfG\to\hat\mfG$ must permute left cosets of certain proper subgroups of $\mfG$, and therefore must fix the point at infinity. \rf{F:xie} then follows from the observation that metric inversions do not fix the point at infinity.  

\medskip
While \rf{T:main} falls within the general study of inversion invariant bi-Lipschitz homogeneity, it exhibits a special case of this property in the particular setting of Carnot groups. Note that (non-abelian) Carnot groups are \textit{fractal} in the sense that their topological and Hausdorff dimensions do not agree. When we restrict ourselves to non-fractal metric spaces, we can use recent work of Kinneburg (see \cite{Kinneberg-fractal}) to obtain the following result. 

\begin{theorem}\label{T:euclidean}
Suppose $X$ is a proper and connected metric space whose Hausdorff and topological dimensions are both equal to $n\in\mfN$. The space $X$ is inversion invariant bi-Lipschitz homogeneous if and only if it is bi-Lipschitz equivalent to $\mfR^n$ (when $X$ is unbounded) or $\mfS^n$ (when $X$ is bounded). 
\end{theorem}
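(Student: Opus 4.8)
The plan is to prove both directions, with the forward direction (invertibility $\Rightarrow$ bi-Lipschitz equivalence to $\mfR^n$ or $\mfS^n$) being the substantive one. The backward direction is routine: $\mfR^n$ with its Euclidean metric admits the classical Möbius inversion $x\mapsto x/|x|^2$, which is $1$-quasiconformal and hence a metric inversion, and $\mfR^n$ is bi-Lipschitz homogeneous via translations; bi-Lipschitz equivalences transport both properties, so any space bi-Lipschitz to $\mfR^n$ (or to its one-point compactification $\mfS^n$) is inversion invariant bi-Lipschitz homogeneous.

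For the forward direction, suppose $X$ is proper, connected, inversion invariant bi-Lipschitz homogeneous, with $\dimH X = \dim_{\mathrm{top}} X = n$. First I would record the structural consequences of inversion invariant bi-Lipschitz homogeneity: $X$ is bi-Lipschitz homogeneous, and (passing to a sphericalization or inversion as appropriate) one may arrange the space to be, up to bi-Lipschitz equivalence, either unbounded or bounded, with a metric inversion available in the unbounded case and its compactification available in the bounded case. The key point is that inversion invariance forces a strong degree of self-similarity: the existence of a metric inversion together with bi-Lipschitz homogeneity should yield that $X$ is Ahlfors $n$-regular. Indeed, bi-Lipschitz homogeneity alone on a proper connected space gives a doubling measure and local Ahlfors regularity with a well-defined dimension, and the equality $\dimH X = n$ with the topological dimension also equal to $n$ pins that exponent down; the inversion then propagates regularity from a neighborhood of a point to all scales, including the large scales, giving genuine (global) Ahlfors $n$-regularity. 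This is exactly the input needed to invoke Kinneberg's work \cite{Kinneberg-fractal}.

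The heart of the argument is then to apply \cite{Kinneberg-fractal}: a bi-Lipschitz homogeneous, Ahlfors $n$-regular, proper connected metric space whose topological dimension equals $n$ is bi-Lipschitz equivalent to $\mfR^n$ or to $\mfS^n$. Kinneberg's theorem (in the spirit of the Bonk–Kleiner and Semmes-type characterizations of Euclidean/spherical space via LLC conditions, linear local contractibility, and Ahlfors regularity at the correct integer exponent) supplies precisely this conclusion once we know $X$ is linearly locally connected — and here is where inversion invariance earns its keep a second time: bi-Lipschitz homogeneity gives the local LLC conditions near a single point, and the metric inversion (or the transitive bi-Lipschitz action combined with it) upgrades these to uniform LLC conditions at all scales, including separating large sets, which is the global LLC-2 condition Kinneberg needs. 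One must check the unbounded versus bounded dichotomy aligns with the $\mfR^n$ versus $\mfS^n$ alternative: an unbounded such space cannot be bi-Lipschitz to $\mfS^n$ (which is bounded), and applying the metric inversion to a bounded space produces an unbounded model, so the two cases are handled symmetrically via the inversion.

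The main obstacle I anticipate is verifying global Ahlfors $n$-regularity and the global LLC conditions from the local-plus-inversion data, i.e., making precise the slogan that ``a metric inversion conjugates small scales to large scales.'' One has to be careful that the metric inversion is only $K$-quasiconformal, not bi-Lipschitz, so it does not literally transport Ahlfors regularity; instead one argues that inversion invariant bi-Lipschitz homogeneity means there is a metric (bi-Lipschitz to the original) in which the inversion behaves like the Euclidean one, controlling how the density of the natural measure and the connectivity constants transform under $d(x,y)\mapsto d(x,y)/(\rho(x)\rho(y))$ for an appropriate conformal-type factor $\rho$. Granting that the sphericalization/inversion machinery of \rf{S:defs} provides such control — essentially the content of the definition of a metric inversion together with homogeneity — the regularity and connectivity bounds extend to all scales, and Kinneberg's theorem finishes the proof.
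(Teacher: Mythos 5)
There is a genuine gap in your forward direction: the theorem you attribute to Kinneberg is not the theorem the argument needs, and in the form you state it, it is false. A characterization of $\mfR^n$ or $\mfS^n$ by Ahlfors $n$-regularity plus LLC (or linear local contractibility) at topological dimension $n$ does not exist for $n\geq 3$ (Semmes's examples are Ahlfors $n$-regular, LLC, linearly locally contractible $n$-spheres that are not even quasisymmetrically equivalent to $\mfS^n$), and even in the case $n=2$ the Bonk--Kleiner uniformization theorem yields only a quasisymmetric parametrization, never the bi-Lipschitz equivalence asserted here. So your strategy of ``upgrade local LLC to global LLC via the inversion and then quote an LLC-plus-regularity characterization'' cannot be completed. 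The result from \cite{Kinneberg-fractal} that actually applies (Theorem 5.1 there, resting on the Bonk--Kleiner rigidity results in \cite{BK-rigidity}, Section 5) has a different hypothesis: one needs a family of \emph{uniformly strongly quasim\"obius} self-homeomorphisms of the sphericalization $(\hat X,\hat d_q)$ which maps every triple of distinct points to a uniformly separated triple; it is the strong (linear) quasim\"obius control, combined with Ahlfors $n$-regularity and topological dimension $n$, that upgrades the conclusion from quasim\"obius to bi-Lipschitz equivalence with $\mfS^n$.

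Consequently, the missing heart of the proof is the construction of such triple-separating maps, and nothing in your proposal produces them. The paper builds them explicitly as compositions $g=k\circ h\circ\phi\circ f$, where $f$ comes from the bi-Lipschitz homogeneity family (moving one point of the triple to the base point of the inversion), $\phi$ is the metric inversion (sending that point to $\infty$), $h$ is a quasi-self-similarity of $(X,d)$ scaling by roughly $d(y,z)^{-1}$ supplied by Theorem 2.7 of \cite{Freeman-params}, and $k$ is a bi-Lipschitz change of sphericalization basepoint from \cite{BHX-inversions}; one then checks via the sphericalization estimates that the image triple is $\delta$-separated with $\delta$ depending only on $L$, and that $g$ is $Kt$-quasim\"obius. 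Your preliminary step (doubling via \cite{Freeman-params}, Ahlfors $n$-regularity via \cite{Freeman-iiblh}, with the exponent pinned by the Hausdorff dimension) is consistent with the paper, and your bounded case and converse are fine in outline, but without the quasi-self-similarity/triple-separation construction the appeal to \cite{Kinneberg-fractal} does not go through, and no LLC-based substitute can deliver a bi-Lipschitz conclusion.
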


In fact, an analogue of \rf{T:euclidean} holds when the Hausdorff dimension of $X$ is strictly larger than its topological dimension, but this requires additional assumptions about the one-dimensional metric structure of $X$ (cf.\,\cite[Theorem 1.5]{Kinneberg-fractal}). 

\begin{acknowledgments}
The proof of \rf{T:main} was inspired by observations made in \cite[Section 6.3]{BS13} about the Tits classification of $2$-transitive group actions. The author is grateful to Xiangdong Xie for many helpful discussions during the preparation of this paper, and for the helpful suggestions of the anonymous referees.

In this revised version of the paper, we replace Lemma 4.3 (in the indexing of the previous version) with a clarified proof of Theorem 1.1. 
\end{acknowledgments}

\section{Notation and Definitions}\label{S:defs}     

Given two numbers $A$ and $B$, we write $A\eqx_C B$ to indicate that $C^{-1}A\leq B\leq CA$, where $C$ is  independent of $A$ and $B$. When the quantity $C$ is understood or irrelevant, we simply write $A\eqx B$.

Given a metric space $(X,d)$, $r>0$, and $x\in X$ we write $B(x;r):=\{y\in X:d(x,y)<r\}$ to denote an open ball in $X$ centered at $x$ of radius $r$. A metric space $X$ is said to be \textit{proper} if the closures of open balls in $X$ are compact.

Given $1\leq L<+\infty$, an embedding $f:X\to Y$ is \textit{$L$-bi-Lipschitz} provided that for all points $x,y\in X$ we have $d_Y(f(x),f(y))\eqx_L d_X(x,y)$. Two spaces $X$ and $Y$ are \textit{$L$-bi-Lipschitz equivalent} if there exists an $L$-bi-Lipschitz homeomorphism between the two spaces. A space $X$ is \textit{bi-Lipschitz homogeneous} if there exists a collection $\mathcal{F}$ of bi-Lipschitz self-homeomorphisms of $X$ such that, for every pair $x,y\in X$, there exists $f\in \mathcal{F}$ with $f(x)=y$. When every map in $\mathcal{F}$ is $L$-bi-Lipschitz we say that $X$ is $L$-bi-Lipschitz homogeneous, or \textit{uniformly bi-Lipschitz homogeneous} when the particular distortion bound is not important. 

An embedding $f:X\to Y$ is \textit{$\theta$-quasim\"obius} if $\theta:[0,+\infty)\to[0,+\infty)$ is a homeomorphism and, for all quadruples $x,y,z,w$ of distinct points in $X$, we have
\[\frac{d(f(x),f(y))d(f(z),f(w))}{d(f(x),f(z))d(f(y),f(w))}\leq \theta\left(\frac{d(x,y)d(z,w)}{d(x,z)d(y,w)}\right).\]
When there exists a constant $1\leq C<+\infty$ such that $\theta(t)=Ct$, we say that $f$ is \textit{strongly quasim\"obius}. See \cite{Kinneberg-fractal} for a detailed study of such mappings.

An embedding $f:X\to Y$ is \textit{metrically $K$-quasiconformal} if $1\leq K<+\infty$ and, for all $x\in X$,
\[\limsup_{r\to0}\frac{\sup\{d(f(x),f(y)):d(x,y)\leq r\}}{\inf\{d(f(x),f(z)):d(x,z)\geq r\}}\leq K.\] 
In the present paper, since no other definition of quasiconformality is used, we drop the qualifier `metrically,' and simply refer to $K$-quasiconformal mappings. 

\medskip
Given an unbounded metric space $(X,d)$ and a point $p\in X$, we define $\hat{X}:=X\cup\{\infty\}$ and $X_p:=X\setminus\{p\}$. We say that a homeomorphism $\phi:X_p\to X_p$ is an \textit{$L$-metric inversion} provided that there exists a constant $1\leq L<+\infty$ such that for any $x,y\in X_p$, 
\[d(\phi(x),\phi(y))\eqx_L\frac{d(x,y)}{d(x,p)d(y,p)}.\]
We extend $\phi$ to $\hat{X}$ by the definitions $\phi(\infty):=p$ and $\phi(p):=\infty$. We say that \textit{$(X,d)$ admits a metric inversion} provided that there exists a point $p\in X$ such that an $L$-metric inversion exists on $X_p$. We note that the inversions of \cite[Theorem 4.2]{CDKR-Heisenberg} are $1$-metric inversions. In general, while $L$-metric inversions are $L^2$-quasiconformal, they need not be $1$-quasiconformal. 

The definition of metric inversion given above is closely related to the metric inversion described in \cite{BHX-inversions}. In \cite{BHX-inversions}, the authors construct a distance $d_p$ on $\hat{X}_p$ such that, for any $x,y\in X_p$,
\[\frac{1}{4}\cdot\frac{d(x,y)}{d(x,p)d(y,p)}\leq d_p(x,y)\leq\frac{d(x,y)}{d(x,p)d(y,p)},\]
with obvious analogues when $x$ or $y$ is equal to $\infty$. They also prove that the identity map from $(X_p,d)$ to $(X_p,d_p)$ is $16t$-quasim\"obius (\cite[Lemma 3.1]{BHX-inversions}) and 1-quasiconformal at non-isolated points (\cite[Proposition 4.1]{BHX-inversions}). While the \cite{BHX-inversions} definition of metric inversion is valid for both bounded and unbounded spaces, if $X$ is an unbounded space admitting an $L$-metric inversion $\phi$ at $p$, then $\phi:(X_p,d_p)\to(X_p,d)$ is a $4L$-bi-Lipschitz homeomorphism. 

A related concept is that of \textit{metric sphericalization}. Again following \cite{BHX-inversions}, the metric sphericalization of an unbounded metric space $(X,d)$ at some basepoint $p\in X$ is denoted by $(\hat{X},\hat{d}_p)$. Here $\hat{d}_p$ is a distance such that, for any $x,y\in \hat{X}$,
\begin{equation}\label{E:sphere}
\frac{1}{4}\cdot\frac{d(x,y)}{(1+d(x,p))(1+d(y,p))}\leq\hat{d}_p(x,y)\leq\frac{d(x,y)}{(1+d(x,p))(1+d(y,p))}.
\end{equation}
As with metric inversion (see \cite[Section 3.B]{BHX-inversions}), the identity map from $(X,d)$ to $(\hat{X},\hat{d}_p)$ is $16t$-quasim\"obius and $1$-quasiconformal at non-isolated points. 

Following \cite{Freeman-iiblh} and \cite{Freeman-params}, a metric space $(X,d)$ is \textit{inversion invariant bi-Lipschitz homogeneous} provided that both $(X,d)$ and $(\hat{X}_p,d_p)$ are uniformly bi-Lipschitz homogeneous. This definition is independent of $p\in X$, up to a quantitative change in parameters. One can verify that, when $(X,d)$ is unbounded, this definition is equivalent to the statement that $(X,d)$ is bi-Lipschitz homogeneous and $(X,d)$ admits a metric inversion.

Inversion invariant bi-Lipschitz homogeneous metric spaces are often \textit{Ahlfors $Q$-regular} (see \cite[Theorem 1.1]{Freeman-iiblh}). Given $0< Q<+\infty$, a metric space $(X,d)$ with Borel measure $\mu$ is Ahlfors $Q$-regular provided that there exists a constant $1\leq C<+\infty$ such that $\mu(B(x;r))\eqx_C r^Q$ for every $x\in X$ and $0<r\leq\diam(X)$. 

\medskip
\textit{Carnot groups} are examples of Ahlfors $Q$-regular metric spaces. A Carnot group $\mathbb{G}$ of step $n\in\mfN$ is a connected, simply connected, nilpotent Lie group with stratified Lie algebra $\Lie(\mathbb{G})=V_1\oplus V_2\oplus\dots\oplus V_n$. The \textit{layers} $V_i$ are such that, for $1\leq j\leq n-1$, we have $[V_j,V_1]=V_{j+1}$. Here $[X,Y]=XY-YX$ denotes the Lie bracket. We require $V_n\not=\{0\}$, and that for each $1\leq j\leq n$ we have $[V_j,V_{n}]=\{0\}$. We refer to $V_1$ as the \textit{horizontal layer} of $\Lie(\mathbb{G})$. By left-translation $V_1$ is extended to a (left-invariant) distribution $\Delta$ on $\mathbb{G}$, referred to as the \textit{horizontal distribution}.

When $\Delta$ is equipped with a left-invariant norm $|\cdot|$, we define the associated \textit{sub-Finsler distance $d_{SF}$} on $\mathbb{G}$ as follows. Let $\gamma:[0,1]\to\mathbb{G}$ be an absolutely continuous path. The path $\gamma$ is \textit{horizontal} provided that for almost every $t\in[0,1]$ we have $\dot\gamma(t)\in \Delta$.  The $d_{SF}$ length of a horizontal path $\gamma$ is $\ell_{SF}(\gamma):=\int_0^1|\dot\gamma(t)|dt$.
We then define 
\begin{equation*}\label{E:cc-dist}
d_{SF}(x,y):=\inf\{\ell_{SF}(\gamma):\,\gamma \text{ a horizontal path such that } \gamma(0)=x,\,\gamma(1)=y\}.
\end{equation*}
By well known results of Chow and Rashevskii, $d_{SF}$ defines a geodesic distance on $\mathbb{G}$. Thus a \textit{sub-Finsler Carnot group} is a Carnot group equipped with a distance $d_{SF}$. When the norm on $\Delta$ is derived from an inner product, we obtain a \textit{sub-Riemannian distance} on $\mfG$, denoted by $d_{SR}$. 

Since the norm on $\Delta$ is left-invariant, for any element $g\in \mfG$, the left translation $\ell_g(x):=gx$ is an isometry of $(\mfG,d_{SF})$. Sub-Finsler distance is also homogeneous with respect to \textit{canonical dilations}. For an element $x\in \mfG$, let $X:=\log(x)=\sum_{i=1}^nX_i$, where $X_i\in V_i$. For any $t>0$, define the canonical dilation $\delta_t:(\mfG,d_{SF})\to(\mfG,d_{SF})$ as $\delta_t(x)=\exp\left(\sum_{i=1}^nt^iX_i\right)$. For points $x,y\in\mfG$, we have $d_{SF}(\delta_t(x),\delta_t(y))=t\,d_{SF}(x,y)$.

\section{Generalized Heisenberg Groups}\label{S:heisenberg}    

Let $\mathfrak{n}$ denote a Lie algebra endowed with an inner product $\langle\cdot,\cdot\rangle$ and accompanying norm $|\cdot|$. Suppose that $\mathfrak{n}$ is either abelian or stratified of step two. In the step two case, this means that there exist non-trivial complementary orthogonal subspaces $\mathfrak{v}$ and $\mathfrak{z}$ such that $[\mathfrak{v},\mathfrak{v}]=\mathfrak{z}$ and $[\mathfrak{v},\mathfrak{z}]=\{0\}=[\mathfrak{z},\mathfrak{z}]$. For $X,Y\in\mathfrak{v}$ and $Z\in\mathfrak{z}$, let the map $J:\mathfrak{z}\to\End(\mathfrak{v})$ be defined via the formula $\langle J_ZX,Y\rangle=\langle Z,[X,Y]\rangle$. The algebra $\mathfrak{n}$ is of \textit{Heisenberg type} provided that, for all $X\in\mathfrak{v}$ and $Z\in\mathfrak{z}$, we have $|J_ZX|=|Z||X|$. Equivalently, $J_Z^2=-|Z|^2I$, where $I$ denotes the identity map. Various properties of the map $J$ are documented in \cite[Section 2(a)]{CDKR98}. A simply connected Lie group is said to be of Heisenberg type if its Lie algebra is of Heisenberg type.

Given a Lie algebra $\mathfrak{n}$ of Heisenberg type, we say that $\mathfrak{n}$ satisfies the \textit{$J^2$-condition} provided that, for any $X\in\mathfrak{v}$ and any two orthogonal elements $Z,Z'\in\mathfrak{z}$, there exists some element $Z''\in\mathfrak{z}$ such that $J_Z J_{Z'}X=J_{Z''}X$. Note that if $\dim(\mathfrak{z})\in\{0,1\}$, this condition is vacuously satisfied. 

\medskip
We say that a Carnot group is a \textit{generalized Heisenberg group} if it is a Heisenberg group over $\mfK$, where (here and in the sequel) $\mfK$ denotes either the real numbers $\mfR$, complex numbers $\mfC$, quaternions $\mfH$, or octonians $\mfO$. These groups are defined as follows:

\begin{itemize}
  \item{The Heisenberg group over $\mfR$, or the \textit{real Heisenberg group} $H_\mfR$, is $\mfR^n$.}
  \item{The Heisenberg group over $\mfC$, or the \textit{complex Heisenberg group} $H_\mfC$, is the Carnot group with step two real Lie algebra $\mathfrak{n}=\mathfrak{v}\oplus \mathfrak{z}$, where $\mathfrak{v}:=\Span\{X_i,Y_i:1\leq i\leq n\}$ and $\mathfrak{z}:=\Span\{Z\}$. Equip $\mathfrak{n}$ with an inner product such that $\{X_i,Y_i,Z:1\leq i\leq n\}$ is an orthonormal basis. The only non-trivial bracket relations are $[X_i,Y_i]=Z$, for $1\leq i\leq n$.} 
  \item{The Heisenberg group over $\mfH$, or the \textit{quaternionic Heisenberg group} $H_\mfH$, is the Carnot group with step two real Lie algebra $\mathfrak{n}=\mathfrak{v}\oplus\mathfrak{z}$, where $\mathfrak{v}=\Span\{X_i,Y_i,V_i,W_i:1\leq i \leq n\}$ and $\mathfrak{z}=\Span\{Z_k:1\leq k \leq 3\}$. Equip $\mathfrak{n}$ with an inner product such that $\{X_i,Y_i,V_i,W_i,Z_k:1\leq i\leq n, 1\leq k \leq 3\}$ is an orthonormal basis. For $1\leq i \leq n$, the only nontrivial bracket relations are $[X_i,Y_i]=Z_1=[V_i,W_i]$, $[X_i,V_i]=Z_2=[W_i,Y_i]$, and $[X_i,W_i]=Z_3=[Y_i,V_i]$.}
  \item{The Heisenberg group over $\mfO$, or the \textit{octonionic Heisenberg group} $H_\mfO$, is the Carnot group with step two real Lie algebra $\mathfrak{n}=\mathfrak{v}\oplus\mathfrak{z}$, where $\mathfrak{v}=\Span\{X_i:0\leq i\leq 7\}$ and $\mathfrak{z}=\Span\{Z_k:1\leq k \leq 7\}$. Equip $\mathfrak{n}$ with an inner product such that $\{X_i,Z_k:0\leq i\leq 7, 1\leq k \leq 7\}$ is an orthonormal basis. The only nontrivial bracket relations are $[X_0,X_k]=Z_k$ for $1\leq k\leq 7$ and $[X_i,X_j]=\varepsilon_{ijk}Z_k$, for $1\leq i,j,k\leq 7$. Here $\varepsilon$ is a completely antisymmetric tensor whose value is $+1$ when $ijk=124, 137, 156, 235, 267, 346, 457$.}
\end{itemize}

Via exponential coordinates, parameterizations of the groups $H_\mfK$ can be obtained as follows:
\begin{itemize}
  \item{When $\mfK=\mfR$, the abelian group $H_\mfK$ is equal to $\mfR^n$.}
  \item{When $\mfK=\mfC$, for each $1\leq i\leq n$, identify $x_iX_i+y_iY_i$ with $x_ie_0+y_ie_1\in\mfC$. Here $\{e_0,e_1\}$ is the canonical basis over $\mfR$ for $\mfC$. Thus $\Span\{X_i,Y_i:1\leq i\leq n\}$ is identified with $\mfC^n$. Identify $zZ$ with $ze_1\in\Im(\mfC)$.}
  \item{When $\mfK=\mfH$, for each $1\leq i\leq n$, identify $x_iX_i+y_iY_i+v_iV_i+w_iW_i$ with $x_ie_0+y_ie_1+v_ie_2+w_ie_3$.   Here $\{e_i\}_{i=0}^3$ is the canonical basis over $\mfR$ for $\mfH$. Thus $\Span\{X_i,Y_i,V_i,W_i:1\leq i\leq n\}$ is identified with $\mfH^n$. For each $1\leq k\leq 3$, identify $z_kZ_k$ with $z_ke_k\in\Im(\mfH)$.}
  \item{When $\mfK=\mfO$, identify $\sum_{i=0}^7x_iX_i$ with $\sum_{i=0}^7x_ie_i$ and $\sum_{k=1}^7z_kZ_k$ with $\sum_{k=1}^7z_ke_k$. Here $\{e_i\}_{i=0}^7$ is the canonical basis over $\mfR$ for $\mfO$. Thus $\Span\{X_i:0\leq i\leq7\}$ is identified with $\mfO$ and $\Span\{Z_k:1\leq k \leq 7\}$ is identified with $\Im(\mfO)$.}
\end{itemize} 

Extending the above identifications by linearity allows us to parameterize $H_\mfK$ by $\mfK^n\oplus \Im(\mfK)$. When $\mfK=\mfR$ we have $\Im(\mfK)=\{0\}$, and when $\mfK=\mfO$ we have $n=1$. Let $(x,z)=(\sum_{i=1}^nx_i,z)$ denote a point in $H_\mfK=\mfK^n\oplus\Im(\mfK)$. Via the Baker-Campbell-Hausdorff formula, for points $(x,z),(x',z')\in H_\mfK$, the group law reads as
\[(x,z)(x',z')=\left(x+x',z+z'-\frac{1}{2}\sum_{i=1}^n\Im(x_i\overline{x_i}')\right).\]

Let $m=\dim\left(\Im(\mfK)\right)$, so that $m\in\{0,1,3,7\}$. Given a canonical basis element $e_k\in \Im(\mfK)$,  we define a map $L_k\in\End(\mfK^n)$ such that, for $x\in\mfK^n$, $L_k(x)=e_kx=\sum_{i=0}^ne_kx_i$. In other words, $L_k$ denotes left multiplication in $\mfK^n$ by $e_k$. Passing through the above parameterization of $H_\mfK$ and extending by linearity, this gives rise to a map $J:\mathfrak{z}\to\End(\mathfrak{v})$ such that, for any $X,Y\in\mathfrak{v}$ and $Z\in\mathfrak{z}$, we have $\langle J_ZX,Y\rangle=\langle[X,Y],Z\rangle$. Furthermore, it is straightforward to verify that, for any $Z\in \mathfrak{z}$, the map $J_Z:\mathfrak{v}\to\mathfrak{v}$ satisfies $J_Z^2=-|Z|^2I$. For two orthogonal elements $Z,Z'\in\mathfrak{z}$, there exists $Z''\in\mathfrak{z}$ such that $J_ZJ_{Z'}=J_{Z''}$. When $\mfK=\mfH$, this last observation follows from the associativity of left multiplication. In the case that $\mfK=\mfO$, this follows from the fact that $\dim(\mathfrak{z})=\dim(\mathfrak{v})-1$.  Thus we verify that generalized Heisenberg groups are of Heisenberg type and satisfy the $J^2$-condition.

For any Lie algebra $\mathfrak{n}$ and corresponding simply connected Lie group $N$ of Heisenberg type, one may construct the Lie algebra $\mathfrak{s}:=\mathfrak{n}\oplus\mathfrak{a}$, where $\mathfrak{a}$ is a one-dimensional Lie algebra with inner product. Let $\mathfrak{a}$ be spanned by the unit vector $H$. The Lie bracket on $\mathfrak{s}$ is determined by the requirements that $[H,X]=\frac{1}{2}X$ and $[H,Z]=Z$ for any $X\in \mathfrak{v}$ and any $Z\in\mathfrak{z}$. We extend the inner products on $\mathfrak{n}$ and $\mathfrak{a}$ to $\mathfrak{s}$ by requiring that $\mathfrak{n}$ is orthogonal to $\mathfrak{a}$. Proceeding as in \cite[Section 3(a)]{CDKR98}, one obtains the group $S:=\exp(\mathfrak{s})$ as a semidirect product $NA$, where $A:=\exp(\mathfrak{a})$. If we parameterize $S$ via $\mathfrak{v}\times\mathfrak{z}\times\mfR_+$ by identifying $(X,Z,t)$ with $\exp(X+Z)\exp(\log(t)H)\in S$, then an element $a_t=(0,0,t)\in A\subset S$ acts on $n=(X,Z,1)\in N\subset S$ by $a_t(n)=(t^{1/2}X,tZ,t)$. By translating the inner product on $\mathfrak{s}$, we obtain a left invariant distance on $S$.

One can then proceed to construct the Siegel-type domain 
\[D:=\left\{(X,Z,t):t-\frac{|X|^2}{4}>0\right\}\subset\mathfrak{v}\oplus\mathfrak{z}\oplus\mathfrak{a}.\]
The domain $D$ can be explicitly identified with $S$. When the left-invariant distance on $S$ is pulled back to $D$ via this identification, $S$ possesses a simply transitive action on $D$ by affine transformations (see \cite[Section 3(b)]{CDKR98}). The group $N$ can be identified with the set 
\[\partial D=\left\{\left(X,Z,\frac{|X|^2}{4}\right)\right\}\subset\mathfrak{v}\oplus\mathfrak{z}\oplus\mathfrak{a}.\]

Given $n=(x,z)=\exp(X+Z)\in N$, write 
\begin{equation}\label{E:gauge}
\|n\|:=\left(\frac{|X|^4}{16}+|Z|^2\right)^{1/4}
\end{equation}
The function $d_N(n,n'):=\|n'^{-1}n\|$ defines a distance on $N$ that is invariant under left multiplication. Furthemore, the action of $A$ extends to $\partial D$ such that for any $n,n'\in N$ and $a_t\in A$, we have $d_N(a_t(n),a_t(n'))=t^{1/2}d_N(n,n')$.  

When $N=H_\mfK$, the geometry of $D$ is identified by the following result (\cite{CDKR-Heisenberg}, \cite{CDKR98}, \cite[Theorem 4.1.9.A]{Vanhecke95}). 

\begin{fact}\label{F:classification}
Suppose that $D$ is the Siegel-type domain associated to a Lie algebra $\mathfrak{n}=\mathfrak{v}\oplus\mathfrak{z}$ of Heisenberg type. Let $m:=\dim(\mathfrak{z})$. The algebra $\mathfrak{n}$ satisfies the $J^2$-condition if and only if

\parbox[1cm]{\textwidth}{
\begin{enumerate}
  \item[($m=0$)]{the space $D$ is isometric to real hyperbolic space.}
  \item[($m=1$)]{the space $D$ is isometric to complex hyperbolic space.}
  \item[($m=3$)]{the space $D$ is isometric to quaternionic hyperbolic space.}
  \item[($m=7$)]{the space $D$ is isometric to octonionic hyperbolic space.}
\end{enumerate}} 
In particular, $\mathfrak{n}$ satisfies the $J^2$-condition if and only if $\exp(\mathfrak{n})$ is isomorphic to $H_\mfK$. 
\end{fact}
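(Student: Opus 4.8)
The plan is to prove two equivalences and then separate the four cases by a dimension count. Writing $k:=\dim_{\mfR}\mfK$, so that $k=m+1$, I would establish: ($\ast$) $\mathfrak{n}$ satisfies the $J^2$-condition if and only if $S=NA$, equipped with the left-invariant metric coming from the inner product on $\mathfrak{s}$, is a rank one Riemannian symmetric space of noncompact type; and ($\ast\ast$) $\mathfrak{n}$ satisfies the $J^2$-condition if and only if $\mathfrak{n}\cong\mfK^n\oplus\Im(\mfK)=\Lie(H_\mfK)$ for some $\mfK$. Granting these, $\dim\mathfrak{v}=kn$ and $\dim\mathfrak{z}=m=k-1$, so the symmetric space produced in ($\ast$) has the dimension of the $\mfK$-hyperbolic space built over $\mfK^n$; it is therefore real, complex, quaternionic, or octonionic hyperbolic space according as $m=0,1,3,7$, and $\exp(\mathfrak{n})$ is the corresponding nilpotent Iwasawa factor, which is $H_\mfK$. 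The reverse implications are essentially free: the excerpt already verified that $H_\mfK$ is of Heisenberg type and satisfies $J^2$, and a solvable group acting simply transitively by isometries on one of the four hyperbolic spaces is its Iwasawa factor $AN$, so its nilpotent part is $H_\mfK$.

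For ($\ast$) I would argue through curvature. The metric on $S$ is pinned down by the relations $[H,X]=\tfrac12 X$, $[H,Z]=Z$ and by the maps $J_Z$; following \cite{CDKR98} and \cite[Theorem 4.1.9.A]{Vanhecke95} one computes explicit formulas for the curvature tensor $R$ and for $\nabla R$ in terms of the $J_Z$. From these formulas, $S$ has strictly negative sectional curvature when $\mathfrak{n}$ is nonabelian (and is real hyperbolic space when $\mathfrak{n}=\mathfrak{v}$ is abelian), and --- the decisive point --- $\nabla R\equiv 0$ precisely when $J_ZJ_{Z'}X\in\{J_WX:W\in\mathfrak{z}\}$ for all $X\in\mathfrak{v}$ and all orthogonal $Z,Z'\in\mathfrak{z}$, that is, precisely when $\mathfrak{n}$ satisfies the $J^2$-condition. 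Since $S$ is simply connected, $\nabla R\equiv 0$ makes $D$ a symmetric space; having strictly negative curvature it has rank one (a symmetric space of rank $\geq 2$ contains a flat totally geodesic $2$-plane, hence a zero sectional curvature), so by Cartan's classification $D$ is one of the four hyperbolic spaces.

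For ($\ast\ast$) I would extract a composition algebra from the maps $J_Z$. Polarizing $J_Z^2=-|Z|^2 I$ yields $J_ZJ_{Z'}+J_{Z'}J_Z=-2\langle Z,Z'\rangle I$, so the $J_Z$ define a Clifford module structure on $\mathfrak{v}$. First I would upgrade the pointwise $J^2$-condition to a uniform one --- using that $W\mapsto J_WX$ is a similarity of $\mathfrak{z}$ onto its image for each $X\neq 0$, and comparing the resulting identity on $X$, $X'$, and $X+X'$ for linearly independent $X,X'$ --- to conclude $J_ZJ_{Z'}=J_{Z''}$ on all of $\mathfrak{v}$. Then $\mathcal{A}:=\mfR I\oplus\{J_Z:Z\in\mathfrak{z}\}\subset\End(\mathfrak{v})$ is closed under multiplication, and the norm on $\mathfrak{z}$ (together with $|I|=1$) makes $\mathcal{A}$ a real composition algebra; by Hurwitz's theorem $\dim\mathcal{A}=m+1\in\{1,2,4,8\}$, so $m\in\{0,1,3,7\}$ and $\mathcal{A}\cong\mfR,\mfC,\mfH$, or $\mfO$. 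Finally $\mathfrak{v}$ is a free module over $\mathcal{A}\cong\mfK$ of some rank $n$, hence $\mathfrak{v}\cong\mfK^n$, and the relation $\langle J_ZX,Y\rangle=\langle Z,[X,Y]\rangle$ forces $[X,Y]$ to be a fixed multiple of the imaginary part of the standard $\mfK$-Hermitian pairing on $\mfK^n$; thus $\mathfrak{n}\cong\Lie(H_\mfK)$.

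The hard part will be the composition-algebra step in ($\ast\ast$): passing from the pointwise $J^2$-condition to the uniform identity $J_ZJ_{Z'}=J_{Z''}$, and dealing with the octonionic case $m=7$, where $\mathcal{A}$ is genuinely non-associative, so $\mathfrak{v}$ is not a module over an associative ring and one must instead use $\dim\mathfrak{z}=\dim\mathfrak{v}-1$ (equivalently $n=1$) to pin down the structure and the bracket. The curvature bookkeeping underlying ($\ast$) is lengthy but mechanical by comparison.
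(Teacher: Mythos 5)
This statement is quoted in the paper as a \emph{Fact}, with no proof beyond the citations to \cite{CDKR-Heisenberg}, \cite{CDKR98}, and \cite[Theorem 4.1.9.A]{Vanhecke95}; your part ($\ast$) --- computing $R$ and $\nabla R$ ``following \cite{CDKR98} and \cite{Vanhecke95}'' and asserting that $\nabla R\equiv 0$ is equivalent to the $J^2$-condition --- is precisely the content of those references, so that half of your plan is in effect the same appeal to the literature that the paper makes (and if you grant it, together with the uniqueness up to conjugacy of the simply transitive solvable isometry group, the Fact already follows and ($\ast\ast$) is not needed).

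The genuine gap is in ($\ast\ast$), and it is not a corner case you can patch afterwards: the proposed upgrade from the pointwise $J^2$-condition to the uniform identity $J_ZJ_{Z'}=J_{Z''}$ on all of $\mathfrak{v}$ is \emph{false} exactly in the octonionic case, which must be included in the classification. In $H_\mfO$ the maps $J_{Z_k}$ are left multiplications by imaginary units, and non-associativity kills the uniform identity: with the paper's conventions, $e_1(e_2e_3)=e_1e_5=e_6$ while $(e_1e_2)e_3=e_4e_3=-e_6$, so $J_{Z_1}J_{Z_2}$ is not $J_W$ for any $W\in\mathfrak{z}$; only the pointwise statement (with $Z''$ depending on $X$) holds, via $\dim\mathfrak{z}=\dim\mathfrak{v}-1$. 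Since your route to $m\in\{0,1,3,7\}$ is Hurwitz's theorem applied to $\mathcal{A}=\mfR I\oplus\{J_Z\}$, and Hurwitz needs exactly the closure under composition that fails here, the mechanism that was supposed to deliver the restriction on $m$ breaks down. The fallback you mention for $m=7$ --- ``use $\dim\mathfrak{z}=\dim\mathfrak{v}-1$, equivalently $n=1$'' --- is circular: a priori an $H$-type algebra with $m=7$ could have $\mathfrak{v}\cong\mfO^k$ with $k\geq2$ (the pointwise $J^2$-condition is what rules this out, and proving that is part of the theorem), just as one must rule out $m=2,4,5,6$, $m>7$, and, for $m=3$, modules mixing the two inequivalent irreducible $\mathrm{Cl}(3)$-representations. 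These eliminations are the real content of the algebraic classification and are carried out in \cite{CDKR98}; your sketch does not supply a working substitute for them. (A smaller point: the claim that $S$ has strictly negative curvature whenever $\mathfrak{n}$ is nonabelian is only needed, and only clearly true, after $\nabla R\equiv0$ is in hand, so state it in that order.)
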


Assume $N=H_\mfK$, and let $G_\mfK$ denote the isometry group of $D$. Then $H_\mfK AK$ is an Iwasawa decomposition of $G_\mfK$, where $K$ is the stabilizer of $(0,0,1)\in D$ and $A$ is as above. Writing $M$ to denote the centralizer of $A$ in $K$, \cite[Theorem 7.4]{CDKR98} provides the Bruhat decomposition $G_\mfK=(H_\mfK AM)\cup(H_\mfK \sigma H_\mfK AM)$. Here $\sigma$ is the geodesic inversion of $D$ described in \cite[Section 3(c)]{CDKR98}. 

Via an appropriate Cayley transformation $C$, the domain $D$ can be identified with the unit ball $B\subset\mathfrak{v}\oplus\mathfrak{z}\oplus\mathfrak{a}$. This Cayley transformation $C:D\to B$ can be continuously extended to a homeomorphism between the one point compactification of $D\cup\partial D$ and the closed unit ball $\overline{B}$, where $C(0,0,0)=(0,0,1)$ and $C(\infty)=(0,0,-1)$. Via this identification, the action of $G_\mfK$ can be continuously extended to $\overline{B}$. The stabilizer of $(0,0,-1)\in\partial{B}$ is $H_\mfK AM$ (see the proof of \cite[Theorem 7.4]{CDKR98}). The stabilizer of both $(0,0,1)$ and $(0,0,-1)$ is $AM$. In this way we can view $G_\mfK$ as a group acting on $\hat H_\mfK=\partial B$. The subgroup $H_\mfK AM$ fixes the point at infinity $\infty\in\hat H_\mfK$, and the subgroup $AM$ fixes both the identity element $e\in H_\mfK$ and $\infty\in \hat H_\mfK$.

\section{Preliminary Facts and Lemmas}\label{S:prelims}     

The following fact is a special case of \cite[Theorem 3.3]{Kramer-transitive}. 

\begin{fact}\label{F:two-transitive}
Let $G$ denote a group acting effectively and 2-transitively on a topological sphere. If $G$ is locally compact and $\sigma$-compact, then the identity component of $G$ is a simple Lie group isomorphic to either $G_\mfK$ (when $\mfK=\mfC$, $\mfH$, or $\mfO$) or to an index two subgroup of $G_\mfK$ (when $\mfK=\mfR$).
\end{fact}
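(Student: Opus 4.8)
The result is the specialization of Kramer's classification to the four families $G_\mfK$, so my plan is to reconstruct the relevant portion of that classification. The argument proceeds in three stages: reduce to a connected Lie group, extract a semisimple rank-one structure from the hypothesis of $2$-transitivity, and match the outcome against the known transitive actions on spheres. Throughout, write $S$ for the topological sphere, fix $x_0\in S$, and set $P:=G_{x_0}$; then $2$-transitivity says exactly that $S=G/P$ and that $P$ acts transitively on $S\setminus\{x_0\}$, which is homeomorphic to $\mfR^n$ and in particular connected.

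\textbf{Reduction to a connected Lie group.} Since $G$ acts effectively on a sphere, which is a connected, finite-dimensional, locally connected manifold, the solution of Hilbert's fifth problem for transformation groups (Montgomery--Zippin, Gleason) shows that the locally compact $\sigma$-compact group $G$ is in fact a Lie group. I would then pass to the identity component $G^0$, an open subgroup: because the orbit map $g\mapsto gx_0$ is open, every $G^0$-orbit is open, and open orbits partitioning the connected space $S$ force a single orbit, so $G^0$ is transitive. The same argument applied to the stabilizer shows $G^0$ is transitive on the connected set $S\setminus\{x_0\}$. Hence $G^0$ acts effectively and $2$-transitively, and we may assume from now on that $G$ is a connected Lie group.

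\textbf{From $2$-transitivity to a rank-one simple group.} The heart of the proof is the classification of $2$-transitive connected Lie group actions. Such an action is primitive, so $P$ is a maximal closed subgroup and $G$ has a unique minimal closed connected normal subgroup $M$. The classical dichotomy for $2$-transitive actions (Burnside; in the Lie setting, Tits) says $M$ is either of \emph{affine} type (a vector group acting by translations) or of \emph{almost-simple} type. In the affine case $M$ acts simply transitively, so the underlying space is $M\cong\mfR^n$, which is non-compact; since $S$ is a compact sphere this is impossible, and $M$ is almost simple. Thus $G$ is, up to finite central data, a connected simple Lie group; as $S=G/P$ is compact, $P$ is parabolic, and the Iwasawa decomposition $G=KAN\subseteq KP$ shows that the maximal compact subgroup $K$ already acts transitively on $S$. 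By the Borel--Montgomery--Samelson classification of compact groups acting transitively on spheres, $K$ must appear on the short list $\mathrm{SO}(n)$, $\mathrm{U}(n)$, $\mathrm{SU}(n)$, $\mathrm{Sp}(n)\cdot\mathrm{Sp}(1)$ and its variants, $G_2$, $\mathrm{Spin}(7)$, $\mathrm{Spin}(9)$. Finally, $2$-transitivity forces $P$ to have exactly two orbits on $S$, hence the Weyl group of $G$ to have order two; this is precisely the condition that $G$ have real rank one, leaving only $\mathrm{SO}^0(n,1)$, $\mathrm{SU}(n,1)$, $\mathrm{Sp}(n,1)$, and $F_4^{-20}$.

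\textbf{Identification and the real exception.} These four families are exactly the connected isometry groups of real, complex, quaternionic, and octonionic hyperbolic space appearing in \rf{F:classification}, and their maximal compacts $\mathrm{SO}(n)$, $\mathrm{S}(\mathrm{U}(n)\times\mathrm{U}(1))$, $\mathrm{Sp}(n)\cdot\mathrm{Sp}(1)$, and $\mathrm{Spin}(9)$ match the entries selected above; hence $G^0$ is one of the groups $G_\mfK$ of \rf{S:heisenberg}. For $\mfK=\mfC,\mfH,\mfO$ the connected group is already $2$-transitive on its boundary sphere and coincides with $G_\mfK$. In the real case, however, the full M\"obius group of the sphere also contains an orientation-reversing involution, so a $2$-transitive $G$ may realize all of $G_\mfR$; since this involution lies outside the identity component, $G^0$ is then the index-two orientation-preserving subgroup, which accounts for the stated dichotomy. \emph{The main obstacle is the rank-one structural step}: deriving semisimplicity and real rank one from a purely topological $2$-transitivity hypothesis is the substantive content of the Tits--Kramer classification, requiring both the socle dichotomy and the full theory of transitive actions on spheres, with additional care needed to eliminate low-dimensional coincidences and to track the orientation-reversing component responsible for the index-two correction in the real case.
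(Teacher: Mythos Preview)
The paper does not prove this statement at all: it is stated as a \emph{Fact} and attributed directly to \cite[Theorem~3.3]{Kramer-transitive}, with no argument beyond the citation. Your proposal therefore goes well beyond what the paper does, in that you attempt to reconstruct the relevant portion of Kramer's classification rather than merely invoke it.

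As a sketch of that classification, your outline is broadly sound. The reduction to a Lie group via Montgomery--Zippin, the passage to $G^0$ using openness of orbits on the connected sphere and its punctured complement, the Tits socle dichotomy ruling out the affine case by compactness of $S$, the use of the Bruhat decomposition to force real rank one from the two-orbit condition, and the Borel--Montgomery--Samelson list for the maximal compact are all the correct ingredients, and your final identification of the four rank-one families with the groups $G_\mfK$ is accurate, including the index-two correction in the real case. The one place to be slightly more careful is the claim that $G^0$ inherits $2$-transitivity: you should argue via the identity component $P^0$ of the stabilizer (which is open in $P$ since $G^0$ is open in $G$) acting with open orbits on the connected set $S\setminus\{x_0\}$, rather than asserting directly that $(G^0)_{x_0}$ is transitive. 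You correctly flag the structural rank-one step as the substantive obstacle; filling it in rigorously is essentially reproving Tits's theorem, which is why the paper is content to cite Kramer.
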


The following technical lemma will allow us to apply \rf{F:two-transitive} to prove \rf{T:main}. 

\begin{lemma}\label{L:locally-compact}
Suppose $X$ is a compact metric space and $G\subset\mathcal{C}(X,X)$ is a group of uniformly quasim\"obius self-homeomorphisms. The closure of $G$ is locally compact and $\sigma$-compact in the topology of uniform convergence. 
\end{lemma}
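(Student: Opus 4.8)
The plan is to show that $\close(G)$ is a subset of $\mathcal{C}(X,X)$ that is equicontinuous and pointwise relatively compact, so that the Arzelà–Ascoli theorem applies. First I would observe that since $G$ consists of uniformly $\theta$-quasimöbius homeomorphisms for a fixed control function $\theta$, the family $G$ is equicontinuous: a uniform quasimöbius bound on a compact (hence bounded) metric space forces a uniform modulus of continuity, because cross-ratios degenerate in a controlled way as two points collide while two reference points stay separated. (One must be slightly careful here — quasimöbius control alone gives equicontinuity on compact metric spaces once one fixes, say, two or three ``anchor'' points and uses compactness to bound distances from below away from the diagonal.) Equicontinuity of $G$ passes to the closure $\close(G)$ in $\mathcal{C}(X,X)$ under uniform convergence, and a uniform limit of $\theta$-quasimöbius embeddings is again $\theta$-quasimöbius, hence (being also a limit of surjections on a compact space, so surjective, and injective by the quasimöbius inequality) a homeomorphism. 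Thus $\close(G)$ is a closed, equicontinuous, uniformly bounded (since $X$ is compact, $\mathcal{C}(X,X)$ with the sup metric is bounded) subset of $\mathcal{C}(X,X)$.

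By Arzelà–Ascoli, a closed, equicontinuous, pointwise-bounded subset of $\mathcal{C}(X,X)$ is compact; here pointwise boundedness is automatic since $X$ is compact. Hence $\close(G)$ is in fact \emph{compact}, which immediately gives both local compactness and $\sigma$-compactness. So strictly speaking the conclusion is stronger than stated, and the proof reduces entirely to verifying the hypotheses of Arzelà–Ascoli.

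The one genuine technical point — which I expect to be the main obstacle — is the equicontinuity claim: deducing a \emph{uniform} modulus of continuity for all $g\in G$ from the uniform quasimöbius bound, on an abstract compact metric space. The standard route is: fix points $x_1,x_2,x_3\in X$; for any $g\in G$, at most one of the values $g(x_i)$ can be ``close'' to a given point $w$, so one of them, say $g(x_{i_0})$, is bounded away from $w$ by a definite amount depending only on $\diam(X)$ and the mutual separations of the $x_i$; then for $x,y$ near each other, apply the quasimöbius inequality to the quadruple $(x,y,x_{i_0},x_{j_0})$ (choosing a second index whose image is also controlled) to bound $d(g(x),g(y))$ in terms of $\theta(d(x,y)\cdot(\text{bounded}))$, which tends to $0$ as $d(x,y)\to0$ uniformly in $g$. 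This argument is classical (it is essentially the compactness principle for quasimöbius maps, cf. Väisälä); I would cite it or sketch it, taking care that the ``anchor'' images $g(x_i)$ cannot all collapse — which follows from the quasimöbius inequality applied in reverse, since $g^{-1}$ is also $\theta'$-quasimöbius for a related control function, keeping the images of a fixed triple uniformly separated. Once equicontinuity is in hand, the rest is a routine invocation of Arzelà–Ascoli and the observation that uniform limits of $\theta$-quasimöbius homeomorphisms of a compact space are again such homeomorphisms.
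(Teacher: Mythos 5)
There is a genuine gap, and it is exactly at the point you flagged as the main obstacle: the family $G$ is \emph{not} equicontinuous in general, and consequently $\close(G)$ need not be compact. The quasim\"obius condition only controls cross-ratios, which are scale-invariant, so nothing prevents the images $g(x_1),g(x_2),g(x_3)$ of a fixed triple from collapsing to a point as $g$ ranges over $G$; the fact that $g^{-1}$ is also quasim\"obius does not rescue this, since the inverse inequality again only constrains cross-ratios, not the absolute separation of three image points. The canonical counterexample is the M\"obius group of the round sphere $\mfS^n$: it consists of uniformly ($\theta(t)=t$) quasim\"obius self-homeomorphisms of a compact space, yet it is not equicontinuous (maps concentrating the sphere near a point have arbitrarily large local distortion near the antipodal source) and its closure in the uniform topology is not compact --- it is only locally compact and $\sigma$-compact, which is precisely why the lemma is stated with that weaker conclusion. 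Your ``stronger than stated'' conclusion is therefore a warning sign: in the intended application $G$ contains conjugates of dilations and inversions of a Carnot group, and its closure cannot be compact.

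The correct use of V\"ais\"al\"a's equicontinuity theorem requires the \emph{hypothesis} that the images of some fixed triple stay uniformly separated, and the paper's proof is organized around supplying that hypothesis piecewise rather than globally. One takes a countable dense set $\{T_i\}$ of ordered triples in $X^3$ and covers $G$ by the countably many open sets $B_{i,j}(\varepsilon_{i,j})=\{g\in G: g(T_i)\subset N(T_j;\varepsilon_{i,j})\}$ with $\varepsilon_{i,j}=\sep(T_j)/4$; within each such set the image of the triple $T_i$ is forced to be $\sep(T_j)/2$-separated, so that set is equicontinuous by \cite[Theorem 2.1]{Vaisala-qm} and hence has compact closure by Arzel\`a--Ascoli. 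A group covered by countably many open, relatively compact sets has locally compact and $\sigma$-compact closure, which is the assertion of the lemma. So your overall Arzel\`a--Ascoli strategy is the right instinct, but it must be localized to these subfamilies with separated image triples; applied to all of $G$ at once, the equicontinuity step fails.
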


\begin{proof}
Let $X^3$ denote the space of ordered triples from $X$ endowed with the product distance. Since $X$ is compact, $X^3$ is separable. Let $\{T_i\}$ denote a countable dense subset of $X^3$, where $T_i=(x_i,y_i,z_i)$. Given $T_i$, $T_j$, and $\varepsilon>0$, define 
\[B_{i,j}(\varepsilon):=\{g\in G:g(T_i)\subset N(T_j;\varepsilon)\},\]
where, for $E\subset X$ and $r>0$, $N(E;r):=\{x\in X:\dist(x,E)<r\}$. The sets $B_{i,j}(\varepsilon)$ are open in the compact-open topology (and hence in the uniform convergence topology). Furthermore, for fixed $i,j$ there exists $\varepsilon_{i,j}>0$ such that the set $B_{i,j}(\varepsilon_{i,j})$ is equicontinuous  (see \cite[Theorem 2.1]{Vaisala-qm}). In fact, one can take $\varepsilon_{i,j}:=\sep(T_j)/4$, where $\sep(T_j)$ denotes the minimal distance between distinct pairs in $T_j$. Therefore, by Arzela-Ascoli, $B_{i,j}(\varepsilon_{i,j})$ has compact closure. One can check that $G\subset\cup_{i,j}B_{i,j}(\varepsilon_{i,j})$, and so we conclude that the closure of $G$ is locally compact. Since the collection $\{B_{i,j}(\varepsilon_{i,j})\}$ is countable, the closure of $G$ is $\sigma$-compact. 
\end{proof}

\section{Proofs of \rf{T:main} and \rf{T:euclidean}}\label{S:proofs}  

\begin{proof}[Proof of \rf{T:main}] For use below, we begin by confirming that the composition of (metrically) $1$-quasiconformal maps between open sets of a sub-Riemannian Carnot group remains $1$-quasiconformal. To this end, let $f_1:\Omega_1\to\Omega_2$ and $f_2:\Omega_2\to\Omega_3$ denote $1$-quasiconformal homeomorphisms between open sets of a Carnot group $\mfG$. By \cite[Corollary 7.1]{CC06}, both $f_1$ and $f_2$ are smooth and Pansu differentiable everywhere in their domains. Furthermore, the Lie derivatives of their Pansu differentials are similarities at every point in their domains (when restricted to the horizontal layer). By \cite[Lemma 3.7]{CC06}, these properties are invariant under compositions. Therefore, (again using \cite[Corollary 7.1]{CC06}) we conclude that $f_2\circ f_1:\Omega_1\to\Omega_3$ remains $1$-quasiconformal. 

In order to proceed with the proof, we invoke the assumption that, for some $p\in\mfG$, there exists a $1$-quasiconformal $L$-metric inversion $\phi:(\mfG_p,d_{SR})\to(\mfG_p,d_{SR})$. For any $x\in\mfG$, write $\phi_x:=\ell_{xp^{-1}}\circ\phi\circ\ell_{xp^{-1}}^{-1}$. Since left translations are isometries, the map $\phi_x:(\mfG_x,d_{SR})\to(\mfG_x,d_{SR})$ is a $1$-quasiconformal $L$-metric inversion such that $\phi_x(x)=\infty$ and $\phi_x(\infty)=x$. 

Let $\{x,x',y,y'\}$ denote a quadruple of points in $\hat{\mfG}$ such that $x'=y'$ if and only if $x=y$. We consider the two possible cases for such a quadruple: 

\begin{itemize}
  \item{Assume that $x\not= y$ (and so $x'\not=y'$). If all points are finite, define the map $g:=\phi_{x'}\circ \ell_z\circ \phi_x:\hat\mfG\to\hat\mfG$, where $z:=\phi_{x'}^{-1}(y')\phi_x(y)^{-1}\in\mfG$. If $x=\infty$ and/or $x'=\infty$, then replace $\phi_x$ and/or $\phi_{x'}$, respectively, with the identity map $\id:\hat\mfG\to\hat\mfG$.}
  \item{Assume that $x=y$ (and so $x'=y'$). If both points $x$ and $x'$ are finite, then replace $\phi_x$ and $\phi_{x'}$ with the identity map on $\mfG$. If $x$ and/or $x'$ is the point at infinity, then replace $\phi_x$ and/or $\phi_{x'}$, respectively, with the identity map and replace $\ell_z$ with the identity map.} 
\end{itemize} 

In either case we have $g(x)=x'$ and $g(y)=y'$. Let $G_*$ denote the group generated by finite compositions of the maps $g$ as constructed above. By construction, $G_*$ acts two-transitively on $\hat\mfG$. For each $h\in G_*$, if $h$ fixes the point at infinity, then by the initial paragraph of the proof, $h:(\mfG,d_{SR})\to(\mfG,d_{SR})$ is $1$-quasiconformal away from at most a finite set. In the case that $h(\infty)\not=\infty$ we obtain the same conclusion for $h:(\mfG\setminus h^{-1}(\infty),d_{SR})\to(\mfG\setminus h(\infty),d_{SR})$.

Fix $h\in G_*$. If $h(\infty)\not=\infty$, then define $h_\infty:=\phi_{h(\infty)}\circ h$. If $h(\infty)=\infty$, then define $h_\infty:=h$. In either case, the map $h_\infty:(\mfG,d_{SR})\to(\mfG,d_{SR})$ is a homeomorphism, and (as noted in the preceding paragraph) $h_\infty$ is $1$-quasiconformal away from a finite set. By \cite[Theorem 5.2]{BKR-ACQC}, $h_\infty:(\mfG,d_{SR})\to(\mfG,d_{SR})$ is weakly $H$-quasisymmetric (in the sense of \cite{Vaisala-cylinders}), where $H$ depends only on $\mfG$. Therefore, by \cite[Theorem 2.9]{Vaisala-cylinders}, we conclude that $h_\infty:(\mfG,d_{SR})\to(\mfG,d_{SR})$ is $\eta$-quasisymmetric, with $\eta$ depending only on $\mfG$. By \cite[Theorem 3.2]{Vaisala-qm}, $h_\infty:(\mfG,d_{SR})\to(\mfG,d_{SR})$ is also $\theta''$-quasimobius, with $\theta''$ depending only on $\mfG$. By \cite[Section 3.B]{BHX-inversions}, the identity map between $(\mfG,d_{SR})$ and the sphericalized space $(\mfG,\hat{d}_{SR})$ is $16t$-quasim\"obius. It then follows that $h_\infty:(\hat\mfG,\hat{d}_{SR})\to(\hat{\mfG},\hat{d}_{SR})$ is $\theta'$-quasim\"obius, with $\theta'$ determined solely by $\theta''$. Since $h_\infty=\phi_{h(\infty)}\circ h$, and $\phi_{h(\infty)}:(\hat{\mfG},\hat{d}_{SR})\to(\hat{\mfG},\hat{d}_{SR})$ is a $256L^4t$-quasim\"obius homeomorphism, it follows that $h:(\hat{\mfG},\hat{d}_{SR})\to(\hat{\mfG},\hat{d}_{SR})$ is $\theta$-quasim\"obius, where $\theta$ is determined solely by $L$ and $\mfG$. In conclusion, the group $G_*$ consists of uniformly quasim\"obius self-homeomorphisms of $(\hat{\mfG},\hat{d}_{SR})$. 

Note that canonical dilations of $\mfG$ were not included in the generating set for $G_*$. However, if $\Gamma$ denotes the group of canonical dilations of $\mfG$, then the closure of $\langle G_*,\Gamma\rangle$ is isomorphic to $G$. In particular, we may assume that $\Gamma\subset G$. This follows from \rf{L:locally-compact} and \cite[Theorem 3.3]{Kramer-transitive} because both $\langle G_*,\Gamma\rangle$ and $G_*$ act effectively and $2$-transitively on $(\hat\mfG,\hat{d}_{SR})$ by uniformly quasim\"obius homeomorphisms. 

Given a topological group $H$, let $H^\circ$ denote the identity component. Via \rf{F:two-transitive} we conclude that $G^\circ$ is isomorphic to one of $G_\mfR^\circ$, $G_\mfC$, $G_\mfH$, or $G_\mfO$. In short, $G^\circ$ is isomorphic to $G_\mfK^\circ$. More precisely (see \cite[Theorem 3.3 and Proposition 7.1]{Kramer-transitive}), there exists an isomorphism $\psi:G^\circ\to G_\mfK^\circ$ and a homeomorphism $F:(\hat\mfG,\hat{d}_{SR})\to(\hat H_\mfK,\hat{d}_H)$ such that, for any $g\in G^\circ$ and $x\in \hat\mfG$, we have $F(g(x))=\psi(g)(F(x))$. Here $d_H$ is defined by \rf{E:gauge}, and $\hat{d}_H$ denotes the sphericalized distance $\widehat{(d_H)_e}$. Since $G_\mfK^\circ$ acts two-transitively on $(\hat{H}_\mfK,\hat{d}_H)$, we may assume $F(e)=e$ and $F(\infty)=\infty$.

Given $t>0$, the map $\psi(\delta_t)\in G_\mfK^\circ$ fixes the set $\{e,\infty\}$. Therefore, $\psi(\delta_t)\in AM$. It follows that there exists $a_{s(t)}\in A$ and $m_t\in M$ such that $\psi(\delta_t)=a_{s(t)}m_t$. Here $s:\mfR_+\to\mfR_+$ is a function such that, for any $s,r\in \mfR_+$, we have $s(rt)=s(r)s(t)$. Since, for any $x\in \mfG$, we have $F(\delta_r(x))=\psi(\delta_r)(F(x))$, it also follows that $\lim_{t\to+\infty}s(t)=+\infty$. Since $A$ commutes with $M$, we note that $\psi(\delta_t^{-1})=a_{1/s(t)}m_t^{-1}$.

Given $g\in\mfG$, the map $\psi(\ell_g)$ fixes the point at infinity in $\hat H_\mfK$. Therefore, $\psi(\ell_g)\in H_\mfK AM$, and there exist $h\in H_\mfK$, $a_r\in A$, and $m\in M$ such that $\psi(\ell_g)=\ell_h a_r m$. Combining this with the previous paragraph, we have $\psi(\delta_t^{-1}\ell_g\delta_t)=a_{1/s(t)}m_t^{-1}\ell_ha_rma_{s(t)}m_t$. We then observe that
\begin{align*}
a_{1/s(t)}m_t^{-1}\ell_ha_rma_{s(t)}m_t&=a_{r/s(t)}m_t^{-1}\ell_{[a_{1/r}(h)]}a_{s(t)}mm_t\\
&=a_rm_t^{-1}\ell_{[a_{1/(rs(t))}(h)]}mm_t
\end{align*}

Since $M$ is compact and $a_{1/(rs(t))}(h)\to e\in H_\mfK$ as $t\to+\infty$, there exists $m'\in M$ such that, up to a subsequence, 
\[a_{r}m_t^{-1}\ell_{[a_{1/(rs(t))}(h)]}mm_t\to a_rm'^{-1}mm'\]
as $t\to+\infty$. Here the convergence is uniform on compact subsets of $H_\mfK$. On the other hand, the map $\delta_t^{-1}\ell_g\delta_t=\ell_{[\delta^{-1}_t(g)]}$ is locally uniformly convergent to the identity map of $\mfG$. Since $\psi:G^\circ\to G^\circ_\mfK$ is an isomorphism and both groups act effectively, $a_rm'^{-1}mm'=\id$ as a map of $(H_\mfK,d_H)$. Via the Bruhat decomposition, this implies that $a_r=\id$ and $m'^{-1}mm'=\id$, and so $m=\id$. Therefore, $\psi(\ell_g)=\ell_h$. Since $\mfG$ acts simply transitively on itself by left translations, we conclude that $\psi(\mfG)=H_\mfK$. 

\medskip
In order to prove the reverse implication, suppose $\mfG$ is a generalized Heisenberg group equipped with a sub-Riemannian distance. Due to the comparability of the sub-Riemannian distance with the distance given by \rf{E:gauge}, by \cite[Theorem 4.2]{CDKR-Heisenberg}, the map $\sigma$ satisfies the definition of a metric inversion and is quasiconformal on $\mfG_e$. By (the proof of) \cite[Theorem 5.1]{CDKR-Heisenberg} the Riemannian differential of $\sigma$, when restricted to the horizontal distribution, is the composition of a dilation and an isometry at every point in $\mfG_e$. It then follows from \cite[Lemma 3.4 and Corollary 7.2]{CC06} that the map $\sigma$ is $1$-quasiconformal on $\mfG_e$.
\end{proof}

The above proof can be compared with the methods appearing in \cite{BS13}. In \cite{BS13}, the ideal boundaries of rank one symmetric spaces are characterized via the notion of \textit{space inversions}. Such inversions are M\"obius involutions that satisfy several additional properties related to the M\"obius structure of a metric space. While metric inversions on a bi-Lipschitz homogeneous space can be used to construct analogues to space inversions, such constructions need not possess all the properties required to apply the innovative techniques behind the proof of \cite[Theorem 1.1]{BS13}. 

\begin{proof}[Proof of \rf{T:euclidean}]
This result is obtained by combining results from \cite{BK-rigidity}, \cite{Kinneberg-fractal}, and \cite{Freeman-params}. We first consider the case that $(X,d)$ is unbounded. Suppose $(X,d)$ is inversion invariant bi-Lipschitz homogeneous with respect to a collection of uniformly $L$-bi-Lipschitz self-homeomorphisms $\mathcal{F}$ and an $L$-metric inversion $\phi$ based at some point $p\in X$. It follows from \cite[Theorem 2.7]{Freeman-params} that $(X,d)$ is doubling. Therefore, by \cite[Theorem 1.1]{Freeman-iiblh}, $(X,d)$ is Ahlfors $n$-regular. As noted in \cite[Fact 4.1]{Freeman-iiblh}, for any point $q\in X$, the sphericalized space $(\hat{X},\hat{d}_q)$ is also Ahlfors $n$-regular, with regularity constant depending only on the doubling and homogeneity constants for $(X,d)$.

Fix $q\in X$ and set $\delta:=(16L^3M(1+M))^{-1}$, where $1\leq M<+\infty$ is the quasi-self-similarity constant given by \cite[Theorem 2.7]{Freeman-params}. Note that $M$ depends only on $L$. Let $\{x_1,x_2,x_3\}$ denote a triple of distinct points in $\hat{X}$. We claim that there exists a strongly quasim\"obius homeomorphism $g:(\hat{X},\hat{d}_q)\to(\hat{X},\hat{d}_q)$ such that, for $i\not=j$, we have $\hat{d}_q(g(x_i),g(x_j))\geq\delta$. When $x_1\not=\infty$, there exists $f\in \mathcal{F}$ such that $f(x_1)=p$.  When $x_1=\infty$, set $f:=\phi$. In either case, $\phi\circ f(x_1)=\infty$ and we define $y:=\phi\circ f(x_2)$, $z:=\phi\circ f(x_3)$. Consider the sphericalized space $(\hat{X},\hat{d}_y)$. By \cite[Theorem 2.7]{Freeman-params}, there exists a map $h:(X,d)\to(X,d)$ such that $h(y)=y$ and, for all $u,v\in X$, we have $d(h(u),h(v))\eqx_MC\,d(u,v)$, where $C:=d(y,z)^{-1}$. It follows from \rf{E:sphere} that 
\[\hat{d}_y(h(y),h(z))\geq\frac{1}{4}\cdot\frac{d(h(y),h(z))}{1+d(h(y),h(z))}\geq\frac{1}{4M}\cdot\frac{C\,d(y,z)}{1+MC\,d(y,z)}=\frac{1}{4M(1+M)}\]
We also find that
\begin{align*}
\hat{d}_y(h(\infty),h(z))=\hat{d}_y(\infty,h(z))&\geq\frac{1}{4}\cdot\frac{1}{1+d(h(y),h(z))}\\
&\geq\frac{1}{4}\cdot\frac{1}{1+MC\,d(y,z)}=\frac{1}{4(1+M)}.
\end{align*}
Finally, we note that $\hat{d}_y(y,\infty)\geq1/4$. Therefore, $h\circ\phi\circ f$ maps $\{x_1,x_2,x_3\}$ to a $(4M(1+M))^{-1}$ separated triple in $(\hat{X},\hat{d}_y)$. By \cite[Lemma 3.2]{BHX-inversions}, there exists a $4L^3$-bi-Lipschitz homeomorphism $k:(\hat{X},\hat{d}_y)\to(\hat{X},\hat{d}_q)$. Therefore, for any triple of distinct points $\{x_1,x_2,x_3\}\subset (\hat{X},\hat{d}_q)$, there exists a map of the form $g=k\circ h\circ\phi\circ f$ such that $g$ is a $K t$-quasim\"obius self-homeomorphism of $(\hat{X},\hat{d}_q)$ that maps $\{x_1,x_2,x_3\}$ to a $\delta$-separated triple. Here $K $ depends only on $L$.  

By \cite[Theorem 5.1]{Kinneberg-fractal}, we conclude that $(\hat{X},\hat{d}_q)$ is strongly quasim\"obius equivalent to $\mfS^n$. To justify this application of \cite[Theorem 5.1]{Kinneberg-fractal}, note that this theorem follows from results in \cite[Section 5]{BK-rigidity}. These results are proved under the assumption of a group action by uniformly quasim\"obius homeomorphisms. However, as stated at the beginning of \cite[Section 5]{BK-rigidity}, these results also hold under the weaker assumption that triples can be uniformly separated by uniformly quasim\"obius maps. 

Since strongly quasim\"obius maps between bounded spaces are bi-Lipschitz (see \cite[Remark 3.2]{Kinneberg-fractal}), we find that $(\hat{X},\hat{d}_q)$ is bi-Lipschitz equivalent to $\mfS^n$. Finally, by \cite[Lemma 3.2 and Proposition 3.4]{BHX-inversions}, $(X,d)$ is bi-Lipschitz equivalent to $\mfR^n$. 

\medskip
To finish the proof, we consider the case that $(X,d)$ is bounded. Given any point $p\in X$, the space $(X_p,d_p)$ is unbounded and remains proper, connected, and inversion invariant bi-Lipschitz homogeneous. To verify that $(X_p,d_p)$ remains inversion invariant bi-Lipschitz homogeneous, note that the metric inversion of $(X_p,d_p)$ at any point $q\in X_p$ is bi-Lipschitz equivalent to $(X_q,d_q)$ via the identity map. We conclude as above that $(X_p,d_p)$ is bi-Lipschitz equivalent to $\mfR^n$. By \cite[Lemma 3.2 and Proposition 3.5]{BHX-inversions}, $(X,d)$ is bi-Lipschitz equivalent to $\mfS^n$.
\end{proof}

\bibliographystyle{amsalpha}  
\bibliography{bib}            

\end{document}                